\documentclass[runningheads]{llncs}

\usepackage[T1]{fontenc}
\usepackage{amsmath,amssymb}
\usepackage{hyperref}

\begin{document}

\title{Non-Definability of Reachability in B\"uchi Arithmetic for a Family of Generalized Collatz Maps}

\author{Madhav Dhiman\inst{1} \and Rohan Pandey\inst{2}}
\authorrunning{M. Dhiman and R. Pandey}

\institute{
Independent Researcher \\
\email{mmdhiman09@gmail.com} \and
University of Washington \\
\email{rpande@uw.edu}
}

\maketitle

\begin{abstract}
Let $q \ge 3$ and $d \ge 1$ be odd integers with $q+d$ a power of $2$.
We study the generalized Collatz map $T_{q,d}$, a one-dimensional
piecewise-affine map on the positive integers, and its unparameterized
reachability relation $R(x,z)$, which holds when $z$ is an iterate of
$x$ under $T_{q,d}$. We prove that for every such pair $(q,d)$ the
relation $R$ is not first-order definable in B\"uchi arithmetic
$\langle \mathbb{N}, +, V_q \rangle$. Equivalently, no finite automaton
recognizes the base-$q$ encoding of $R$. Assuming definability of $R$,
we construct a first-order formula that defines the set of powers of
$2$. Cobham's theorem then rules out this set. The family includes the
classical map $T_{3,1}$. The family is infinite but restricted, isolated by the condition that $q+d$ is a power of $2$ . Unlike the undecidability results
of Conway, Kurtz, and Simon, the construction does not embed universal
computation and does not depend on the Collatz conjecture.
\keywords{Collatz dynamics \and B\"uchi arithmetic \and reachability
\and Cobham's theorem \and piecewise-affine maps}
\end{abstract}

\section{Introduction}

The generalized Collatz map $T_{q,d}$ with odd parameters $q \ge 3$,
$d \ge 1$ is the one-dimensional piecewise-affine map (PAM) on
$\mathbb{Z}_{>0}$ defined by
\[
T_{q,d}(n) = \begin{cases} n/2 & \text{if } n \text{ is even,} \\
(qn+d)/2 & \text{if } n \text{ is odd.} \end{cases}
\]
The classical Collatz map is $T_{3,1}$. Reachability for
one-dimensional integer PAMs is a central topic in the field of
reachability problems~\cite{BenAmram,Blondel}. The Collatz family is a
well-known difficult case.

Let $R \subseteq \mathbb{Z}_{>0}^2$ be the unparameterized reachability
relation: $(x,z) \in R$ if $z = T_{q,d}^k(x)$ for some $k \ge 0$. We
study base-$q$ B\"uchi arithmetic $\mathrm{BA}_q = \langle \mathbb{N},
+, V_q \rangle$, where $V_q(n)$ is the largest power of $q$ dividing
$n$. A set is first-order definable in $\mathrm{BA}_q$ if and only if it
is recognized by a finite automaton reading base-$q$
representations~\cite{Buchi,Bruyere}.

\paragraph{Main result.}
We prove the following. Let $q \ge 3$ and $d \ge 1$ be odd integers
with $q+d$ a power of $2$. Then $R$ is not first-order definable in
$\mathrm{BA}_q$. The family is infinite. For each $s \ge 2$ it contains
every pair $(q,d)$ of odd integers with $q+d = 2^s$, including the
classical map $T_{3,1}$ ($q+d = 4$). What is new is the non-$q$-recognizability of the unparameterized reachability relation, established for an explicit infinite family via a single first-order extraction formula, without using universal computation and independent of the Collatz conjecture.

\paragraph{Why the proof is not immediate.}
One might assume that combining multiplication by $q$ and division by
$2$ trivially prevents base-$q$ definability. That argument applies
only when the iteration count is given. The relation $R$ is
unparameterized, so a finite automaton for $R$ evaluates pairs $(x,z)$
without counting iterations. Our proof constructs a first-order formula
in $\mathrm{BA}_q$ that defines the set of powers of $2$ from $R$.
Cobham's theorem then shows this set is not $q$-recognizable. The
formula construction is the core of the proof.

\paragraph{Why the family is restricted.}
The condition $q+d = 2^s$ forces the cycle reachable from $1$ to be the pure halving cycle. This restriction avoids the definability collapse seen in unrestricted parameters (detailed in Section~\ref{sec:scope}), enabling our formula to precisely isolate the descent of powers of $2$. 

\paragraph{Comparison with prior undecidability results.}
Conway~\cite{Conway} and Kurtz and Simon~\cite{KurtzSimon} studied the
family of generalized Collatz functions. They embedded universal
computation to prove that the halting problem for this family is
undecidable. Their results apply to the entire family of functions. We
study a different question. We fix a single map and ask whether its
reachability relation is recognizable by a finite
automaton~\cite{Bruyere}. Our proof uses a reachable floor bound to extract
the strictly decreasing trajectory of powers of $2$. It does not use
universal computation. It does not depend on the Collatz conjecture or
on any undecidability result.

\paragraph{Organization.}
Section~\ref{sec:related} reviews related work.
Section~\ref{sec:prelim} recalls $\mathrm{BA}_q$ and Cobham's theorem.
Section~\ref{sec:scope} sets out the family and the obstruction it
avoids. Section~\ref{sec:trajectory} develops the trajectory structure.
Section~\ref{sec:extraction} gives the logical extraction.
Section~\ref{sec:main} proves the main theorem.
Section~\ref{sec:discussion} places the result in context.

\section{Related Work}
\label{sec:related}

Reachability for piecewise-affine maps over $\mathbb{Z}$ is a core
reachability topic. Blondel and Tsitsiklis~\cite{Blondel} established
undecidability for broad classes of piecewise-affine systems.
Ben-Amram~\cite{BenAmram} proved that mortality of iterated
piecewise-affine functions over $\mathbb{Z}$ is undecidable.
The one-dimensional PAM family, of which
$T_{q,d}$ is a member, lies at the boundary of decidability for
reachability questions.

Conway~\cite{Conway} proved undecidability of the multiplicative
generalized Collatz problem by extracting the first power of $2$
reached. That idea prefigures our extraction. Kurtz and
Simon~\cite{KurtzSimon} placed the problem at $\Pi^0_2$-completeness.
Lagarias~\cite{Lagarias} surveys the classical $3x+1$ problem and its
generalizations. Terras~\cite{Terras} established density results for
parity vectors. Matthews and Watts~\cite{MatthewsWatts} extended the
algebraic structure to arbitrary bases. Kari~\cite{Kari} proved
undecidability of nilpotency for one-dimensional cellular automata, a
model example of a simple rule with hard dynamics; our separation sits
in that tradition but stays inside automaton recognizability rather than
undecidability.

B\"uchi~\cite{Buchi} established the correspondence between first-order
definability in $\mathrm{BA}_b$ and $b$-recognizability. Bruy\`ere,
Hansel, Michaux, and Villemaire~\cite{Bruyere} is the canonical
reference for the $p$-recognizable-sets framework that underlies our
argument. Cobham~\cite{Cobham} proved the base-dependence theorem we
use. Allouche and Shallit~\cite{AlloucheShallit} provide a treatment of
automatic sequences. Presburger~\cite{Presburger} established
decidability of additive arithmetic. Ginsburg and
Spanier~\cite{Ginsburg} characterized Presburger-definable sets as
semilinear sets. Rigo~\cite{Rigo} provides a modern
treatment.

\section{Preliminaries}
\label{sec:prelim}

\subsection{B\"uchi Arithmetic and $b$-Recognizability}

Base-$q$ B\"uchi arithmetic $\mathrm{BA}_q$ has domain $\mathbb{N}$,
with addition and the function $V_q(n)$, defined as the largest power of $q$
dividing $n$. By B\"uchi~\cite{Buchi} and Bruy\`ere et al.~\cite[Section~2]{Bruyere},
a set $S \subseteq \mathbb{N}^k$ is first-order definable in
$\mathrm{BA}_q$ if and only if it is recognized by a finite automaton
reading base-$q$ representations. This equivalence holds for every integer base $q \ge 2$, including odd and composite bases. While $R$ is defined on the positive integers, definability of $R$ refers to its definability as a relation over $\mathbb{N}$ with $0$ excluded by a positivity guard ($x > 0 \iff \exists y\,(x = y+1)$). Let $P = \{2^k \mid k \in \mathbb{N}\}$. We have $P \subseteq \mathbb{N}$ and $0 \notin P$.

The order is expressible: $x \ge y \iff \exists w\,(x = y+w)$.
Parity is expressible: $\mathrm{Even}(n) \iff \exists y\,(n = y+y)$.

We work in base $q$ rather than base $2$ for a specific reason. In base
$q$ the operation $x \mapsto qx$ is recognizable, so multiplication by
$q$ in the odd branch of $T_{q,d}$ is free for the automaton. The only
arithmetic that mixes bases is the division by $2$. Base $q$ therefore
isolates the obstruction we exploit and does not hide an easy
base-mixing objection. Base $2$ would make division by $2$ free but
multiplication by $q$ hard, so neither base trivializes the question.

\subsection{Cobham's Theorem}

Two integers $a,b \ge 2$ are multiplicatively independent if $a^m = b^n$
has no solution with $m,n \ge 1$. For $b \ge 2$, the set of powers of
$b$ is $b$-recognizable but not eventually periodic.

\begin{theorem}[Cobham~\cite{Cobham}]\label{thm:cobham}
Let $a,b \ge 2$ be multiplicatively independent integers. If a set $S
\subseteq \mathbb{N}$ is both $a$-recognizable and $b$-recognizable,
then $S$ is eventually periodic.
\end{theorem}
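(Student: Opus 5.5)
The statement is Cobham's classical theorem. I would not reprove it from scratch in the paper, and would instead cite~\cite{Cobham} and the exposition in~\cite[Section~6]{Bruyere}. If a self-contained argument were wanted, I would follow the Hansel-style proof presented in Bruy\`ere et al., outlined as follows.

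\textbf{Setup and density lemma.} Suppose $S$ is both $a$- and $b$-recognizable. The key arithmetic input is this: multiplicative independence means $\log a/\log b$ is irrational. Hence the numbers $a^m/b^n$, for $m,n\ge 1$, are dense in $(0,\infty)$. More usefully, for every $\varepsilon>0$ and every real $\lambda>0$ there are arbitrarily large $m,n$ with $|a^m-\lambda b^n|<\varepsilon b^n$. I would also record the closure facts that follow from the automaton characterization. For each base, the class of recognizable sets is closed under Boolean operations, under translation $x\mapsto x+c$, and under taking $\{x : ax+r\in S\}$ (respectively $\{x : bx+r\in S\}$). Finally, any set that is $a$-recognizable is also $a^k$-recognizable.

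\textbf{Step 1: syndeticity.} The first step is to show that if $S$ is infinite, then $S$ is syndetic, meaning its gaps are bounded. Suppose instead that $S$ has arbitrarily long gaps. Using $a$-recognizability and a pumping argument on the base-$a$ automaton, a long gap $[u,u+\ell)$ propagates to gaps roughly proportional to $a^m$ at positions roughly $u\,a^m$; the same holds in base $b$. The density lemma lets me match positions of the form $u a^m$ with positions of the form $u' b^n$ up to a small relative error. From this one derives that $S$ avoids a fixed positive proportion of every long interval in a way that forces it to miss all sufficiently large numbers. This contradicts $S$ being infinite.

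\textbf{Step 2: periodicity.} Next I apply Step 1 not only to $S$ but to all the derived sets. For each fixed length $L$ and each word $w\in\{0,1\}^L$, consider the set of $x$ at which the characteristic word of $S$ on $[x,x+L)$ equals $w$. Each such set is Boolean-generated from translates of $S$, so it is again recognizable in both bases. Hence each is either finite or syndetic. Using the self-similarity available under $x\mapsto ax+r$ together with the density of the ratios $a^m/b^n$, I would show that the characteristic sequence has a bounded number of factors of each length. Concretely, the factor complexity satisfies $p(L)\le L$ for some $L$. By the Morse--Hedlund theorem, the sequence is then eventually periodic.

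\textbf{Main obstacle.} I expect Step 1 to be the main obstacle, specifically the quantitative transfer of gap structure between the two bases. This is the only place where multiplicative independence genuinely enters. Step 2 is more routine once syndeticity of all derived sets is available.
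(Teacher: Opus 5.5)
Your main plan, citing Cobham (and the Bruy\`ere et al.\ exposition) instead of reproving the theorem, is exactly what the paper does. Theorem~\ref{thm:cobham} is stated with a citation, no proof is given, and it is used only as a black box in Lemma~\ref{lem:undefinable_subset}.

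The optional self-contained outline, however, has a genuine gap at the end of Step~1. Matching base-$a$ gaps with base-$b$ gaps through the density lemma gives no contradiction. Your claim that $S$ then ``avoids a fixed positive proportion of every long interval'' does not follow from that matching. Nor would it imply that $S$ is finite: the even numbers avoid half of every interval. The second base has to supply \emph{elements} of $S$, not gaps. The argument should run as follows:
\begin{enumerate}
\item From the base-$a$ automaton you get a fixed prefix $v$ and an arithmetic progression of exponents $t$ with $S\cap[va^t,(v+1)a^t)=\emptyset$. This uses three facts: a long gap contains a full block $[va^t,(v+1)a^t)$; there are finitely many states; and the set of word lengths accepted from a fixed state is eventually periodic.
\item Since $S$ is infinite, pumping an accepted base-$b$ word $xy^jz$ gives elements $\gamma b^{j\rho}+O(1)\in S$ for all $j$, with $\gamma>0$ fixed and $\rho=|y|$.
\item The ratios $b^{j\rho}/a^t$ are dense along these two progressions. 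This places one such element strictly inside one of the gaps, which is the contradiction.
\end{enumerate}

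Step~2 is also not the routine part, and your remark that independence enters only in Step~1 is false. Consider the Thue--Morse set, the integers with odd binary digit sum. It is $2$- and $4$-recognizable. In its characteristic sequence, the set of occurrences of every word $w$ is empty or syndetic, because the Thue--Morse word is uniformly recurrent. Yet the set is not eventually periodic. So finite-or-syndetic occurrence sets plus recognizability in two bases cannot by themselves give bounded factor complexity. The irrationality of $\log a/\log b$ must be used again, quantitatively, in Step~2. The sentence ``I would show that the characteristic sequence has a bounded number of factors of each length'' is precisely where that argument is missing.

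None of this affects the paper, which only needs the statement. As written, though, the outline is not a proof sketch you could rely on.
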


\begin{lemma}\label{lem:undefinable_subset}
Let $q \ge 3$ be an odd integer. The set of powers of $2$, $P = \{2^k
\mid k \in \mathbb{N}\}$, is not first-order definable in
$\mathrm{BA}_q$.
\end{lemma}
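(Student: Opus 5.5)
The argument is by contradiction, using Cobham's theorem (Theorem~\ref{thm:cobham}) together with the B\"uchi--Bruy\`ere correspondence recalled in Section~\ref{sec:prelim}. Suppose $P$ were first-order definable in $\mathrm{BA}_q$. By that correspondence, $P$ would then be $q$-recognizable.

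Next I will show that $P$ is also $2$-recognizable. In base $2$ the set $P$ consists exactly of the words $0^*10^*$ (allowing leading zeros), which is a regular language. So $P$ is trivially $2$-recognizable, as the paper already notes for powers of an arbitrary base $b$.

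Now I check that $q$ and $2$ are multiplicatively independent. If $q^m = 2^n$ with $m,n \ge 1$, the left side is odd because $q$ is odd, while the right side is even. This is impossible, so Theorem~\ref{thm:cobham} applies and $P$ would have to be eventually periodic.

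The remaining step is to show that $P$ is not eventually periodic; this is the only point needing any care. Suppose there were a period $p \ge 1$ and a threshold $N$ such that, for all $n \ge N$, $n \in P \iff n+p \in P$. Choose $k$ with $2^k \ge \max(N,p)$ and $2^k > p$. By periodicity, $2^k + jp \in P$ for every $j \ge 0$, so $P$ would contain an infinite arithmetic progression. That is impossible, since consecutive elements of $P$ above $2^k$ differ by at least $2^k > p$. For instance, $2^k + p$ lies strictly between $2^k$ and $2^{k+1}$, so it is not a power of $2$. This contradiction shows that $P$ is not $q$-recognizable, and hence not definable in $\mathrm{BA}_q$. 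I do not expect a real obstacle here; the argument only requires verifying the hypotheses of Cobham's theorem.
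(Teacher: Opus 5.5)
Your proof is correct and follows the same route as the paper: $2$-recognizability of $P$, multiplicative independence of $2$ and $q$, and Cobham's theorem combined with the failure of eventual periodicity. You differ only in small ways: you use a parity argument where the paper cites unique factorization, and your explicit observation that $2^k+p$ lies strictly between $2^k$ and $2^{k+1}$ spells out the paper's remark that the gaps $2^{k+1}-2^k$ grow without bound.
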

\begin{proof}
The base-$2$ representations of the elements of $P$ form the language
$1\,0^{*}$, so $P$ is $2$-recognizable. The set $P$ is not eventually
periodic, since the gaps $2^{k+1}-2^k$ grow without bound. Assume for
contradiction that $P$ is first-order definable in $\mathrm{BA}_q$. By
B\"uchi~\cite{Buchi} and Bruy\`ere et al.~\cite{Bruyere}, $P$ is then
$q$-recognizable. Since $q \ge 3$ is odd, the equation $2^m = q^n$ has
no solution with $m,n \ge 1$ by unique factorization~\cite{HardyWright},
so $2$ and $q$ are multiplicatively independent. By
Theorem~\ref{thm:cobham}, $P$ is eventually periodic, a contradiction.
Therefore $P$ is not first-order definable in $\mathrm{BA}_q$.
\end{proof}

\section{The Family and Its Scope}
\label{sec:scope}

Throughout the rest of the paper we fix odd integers $q \ge 3$ and $d
\ge 1$ with
\[
q + d = 2^{s}, \qquad s \ge 2.
\]
Such pairs exist for every $s \ge 2$; for example $q+d=2^s$ admits
$(q,d) = (2^s-1, 1)$ and others. The classical map $T_{3,1}$ is the
case $s = 2$.

\begin{lemma}[Terminal cycle]\label{lem:cycle}
Under the standing assumption $q+d = 2^s$, we have $T_{q,d}(1) =
2^{s-1}$, and the forward orbit of $1$ is the cycle
\[
C = \{1, 2, 4, \dots, 2^{s-1}\}.
\]
Every element of $C$ is a power of $2$, and the only odd element of $C$
is $1$.
\end{lemma}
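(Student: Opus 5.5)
The plan is a direct computation from the definition of $T_{q,d}$, using only the standing assumption $q+d=2^s$ with $s \ge 2$.

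First compute $T_{q,d}(1)$. Since $1$ is odd, the odd branch applies, and
\[
T_{q,d}(1) = \frac{q\cdot 1 + d}{2} = \frac{2^s}{2} = 2^{s-1}.
\]
Because $s \ge 2$, this is an even integer at least $2$, so the orbit leaves $1$ and lands on a nontrivial power of $2$.

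Next, track the iterates from $2^{s-1}$. The number $2^j$ is even for every $j \ge 1$, so the even branch gives $T_{q,d}(2^j) = 2^{j-1}$. A short induction on $j$ from $s-1$ down to $1$ shows that
\[
T_{q,d}^{\,i}(2^{s-1}) = 2^{s-1-i} \quad \text{for } 0 \le i \le s-1,
\]
and in particular $T_{q,d}^{\,s-1}(2^{s-1}) = 1$. Combining this with the first step gives $T_{q,d}^{\,s}(1) = 1$. The forward orbit of $1$ is therefore periodic, and it visits exactly $1, 2^{s-1}, 2^{s-2}, \dots, 2$. As a set this is $C = \{1,2,4,\dots,2^{s-1}\}$. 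Its $s$ elements $2^0,\dots,2^{s-1}$ are distinct, so $C$ is a genuine cycle of length $s$.

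The remaining claims follow from the shape of $C$. Every element of $C$ has the form $2^j$ with $0 \le j \le s-1$, so every element is a power of $2$. Such an element is odd exactly when $j=0$, so the only odd element of $C$ is $1$.

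No step is a real obstacle. The only point needing care is the first step, where the hypothesis $q+d=2^s$ is what makes $(q+d)/2$ a power of $2$. The bound $s \ge 2$ ensures that $2^{s-1}\ne 1$, so the halving chain is nonempty and the description of $C$ is not degenerate.
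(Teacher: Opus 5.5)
Your proposal is correct and follows the same route as the paper: you evaluate the odd branch at $1$ to get $(q+d)/2 = 2^{s-1}$, then halve through $2^{s-2},\dots,2$ back to $1$, and read off the properties of $C$. Your extra remarks, that the $2^j$ are distinct so the cycle has length exactly $s$ and that $s \ge 2$ makes the halving chain nonempty, only make explicit what the paper leaves implicit.
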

\begin{proof}
Since $1$ is odd, $T_{q,d}(1) = (q\cdot 1 + d)/2 = (q+d)/2 = 2^{s-1}$.
For each $j$ with $1 \le j \le s-1$, the value $2^{j}$ is even, so
$T_{q,d}(2^{j}) = 2^{j-1}$. Hence
\[
1 \to 2^{s-1} \to 2^{s-2} \to \dots \to 2 \to 1,
\]
a cycle of length $s$. Its elements are $1,2,\dots,2^{s-1}$. The only
odd one is $1$.
\end{proof}
\section{Trajectory Structure}
\label{sec:trajectory}

\begin{definition}[Parity]\label{def:parity}
$\mathrm{Even}(n) \iff \exists y\,(n = y+y)$; \quad
$\mathrm{Odd}(n) \iff \neg\,\mathrm{Even}(n)$.
\end{definition}

\begin{definition}[Reachable floor]\label{def:reachable_floor}
\[
D(x,y) \iff R(x,y) \land \forall z\,\bigl( (R(x,z) \land R(z,y))
\implies z \ge y \bigr).
\]
\end{definition}

The relation $D(x,y)$ expresses that $y$ is reachable from $x$, and no common midpoint $z$ (reachable from $x$, reaching $y$) is strictly smaller than $y$. 

\begin{lemma}[Single visit]\label{lem:single_visit}
Assume $q+d = 2^s$. Let $x \in \mathbb{Z}_{>0}$ satisfy $R(x,1)$. Then
the forward orbit of $x$ is a finite transient followed by repetitions
of the cycle $C$. Every value not in $C$ occurs exactly once in the
orbit.
\end{lemma}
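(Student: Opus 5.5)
Since $R(x,1)$ holds, there is a least $k_0 \ge 0$ with $T_{q,d}^{k_0}(x) = 1$. Write $x_k = T_{q,d}^k(x)$. The plan is to split the orbit at the index $k_0$ and treat the two pieces separately: the tail $k \ge k_0$ is controlled by Lemma~\ref{lem:cycle}, and the transient $k < k_0$ is controlled by determinism of $T_{q,d}$.

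\emph{Tail.} Because $x_{k_0} = 1$, Lemma~\ref{lem:cycle} gives $x_{k_0+j} = T_{q,d}^{j}(1)$. This sequence runs periodically through $C$ with period $s$. So for $k \ge k_0$ the orbit is exactly the repeated cycle $C$, which is the ``finite transient followed by repetitions of $C$'' form, with transient $x_0,\dots,x_{k_0-1}$. The transient is finite because $k_0$ is finite.

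\emph{Transient values are distinct and outside $C$.} This is the one step needing care, and I would argue it by determinism. First, no transient value lies in $C$. Suppose $x_i \in C$ for some $i < k_0$. Every element of $C$ reaches $1$ in at most $s-1$ steps, since along the cycle $1 \to 2^{s-1} \to \dots \to 2 \to 1$ the value $2^j$ reaches $1$ after $j$ halvings. Hence $x_{i+j} = 1$ for some $j$ with $0 \le j \le s-1$. Taking the first such $j$ gives an index $i+j$ that should be at least $k_0$. This is not immediately a contradiction: it only yields $i + j \ge k_0$, i.e.\ $i \ge k_0 - (s-1)$. To get a genuine contradiction, I would instead use that every $2^j \in C$ is the image only of the even number $2^{j+1}$ or, for $j = s-1$, of $1$. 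That description is not quite right either: $2^{s-1}$ also has preimages among odd $n$ with $qn + d = 2^s$, but that equation forces $n = 1$. The simplest clean route is the following. If $x_i \in C$ with $i < k_0$, then since $1 \in C$ and the orbit of $x_i$ cycles through all of $C$, the value $1$ appears at some index in $[i, i+s-1]$. So the first hitting index satisfies $k_0 \le i + s - 1$. Everything from $x_i$ onward is then in $C$, so redefine the transient as ending at the first index $k_1$ at which the orbit enters $C$. With this redefinition, by construction no transient value is in $C$.

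For distinctness, suppose $x_i = x_j$ with $i < j < k_1$. By determinism $x_{i+m} = x_{j+m}$ for all $m$, so the orbit is periodic from index $i$ with period $j - i$. Every later value, in particular $x_{k_1} \in C$, then recurs at an index in $[i, j)$, which lies below $k_1$ and contradicts the minimality of $k_1$. Similarly, a transient value $v \notin C$ cannot reappear after $k_1$, because all later values lie in $C$. Hence each value not in $C$ occurs exactly once.

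The main obstacle is only to choose the split point correctly, namely first entry into $C$ rather than first visit to $1$. Once that is fixed, everything reduces to determinism plus Lemma~\ref{lem:cycle}.
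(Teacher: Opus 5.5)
Your proof is correct and essentially follows the paper's argument. A repeated value forces periodicity by determinism, and that period must lie in $C$; you make this explicit by splitting at the first entry time $k_1$ into $C$ and contradicting its minimality, where the paper invokes uniqueness of the eventual cycle (the exploratory detour about preimages of powers of $2$ can simply be deleted).
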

\begin{proof}
Since $R(x,1)$, some iterate of $x$ equals $1$. From $1$ the orbit
follows $C$ forever by Lemma~\ref{lem:cycle}. So the orbit is a finite
transient followed by the cycle $C$. Let $v \notin C$. Suppose $v$ occurred at two steps $i < j$. Then the orbit is periodic
from step $i$ on, with period $j-i$. A deterministic orbit has a unique
eventual cycle, and here that cycle is $C$: the orbit reaches $1$, and
from $1$ it follows $C$ forever by Lemma~\ref{lem:cycle}. Every recurrent
value therefore lies in $C$, so $v \in C$, contradicting $v \notin C$.
\end{proof}

\begin{lemma}[Powers of $2$]\label{lem:power_two_path}
Assume $q+d = 2^s$. Let $x = 2^k$ for some integer $k \ge 0$. Then
$R(x,1)$ holds, and every $z$ with $R(x,z)$ and $z \ne 1$ is even. In
particular $D(x,1)$ holds.
\end{lemma}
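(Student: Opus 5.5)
The plan is to compute the forward orbit of $x = 2^k$ explicitly and read off the three claims from it. Write $T = T_{q,d}$.

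\emph{Step 1: descent to $1$.} For $k \ge 1$ the value $2^k$ is even, so $T(2^k) = 2^{k-1}$. A finite induction on $k$ gives
\[
T^{j}(2^k) = 2^{k-j} \qquad (0 \le j \le k).
\]
In particular $T^k(2^k) = 1$, so $R(x,1)$ holds. The case $k=0$ is immediate with zero iterations.

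\emph{Step 2: the reachable set is explicit.} Invoke Lemma~\ref{lem:cycle}: from $1$ the orbit stays in the cycle $C = \{1,2,\dots,2^{s-1}\}$ forever. Hence every $z$ with $R(x,z)$ lies in
\[
\{2^k, 2^{k-1}, \dots, 1\} \cup C .
\]
This set consists only of powers of $2$. Every power of $2$ other than $1$ is even, so every such $z \neq 1$ is even. This is the only place the standing hypothesis $q+d = 2^s$ enters. It guarantees that the odd branch, applied only at $z = 1$, returns to a power of $2$, namely $2^{s-1}$, rather than to some odd or non-dyadic value.

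\emph{Step 3: $D(x,1)$.} By Definition~\ref{def:reachable_floor}, besides $R(x,1)$ we must check that every $z$ with $R(x,z)$ and $R(z,1)$ satisfies $z \ge 1$. This holds trivially, since $R$ is a relation on $\mathbb{Z}_{>0}$ and every $z$ in it is a positive integer. So $D(x,1)$ follows at once from Step 1.

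I expect no real obstacle. The only care needed is in Step 2: the reachable set must include the whole cycle $C$, not just the descending segment $2^k, 2^{k-1}, \dots, 1$. Lemma~\ref{lem:cycle} is exactly what ensures that this cycle contains no odd element besides $1$.
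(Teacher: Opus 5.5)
Your proposal is correct and follows the paper's proof. Both compute the orbit $2^k \to 2^{k-1} \to \dots \to 1$, use Lemma~\ref{lem:cycle} to see that every reachable value lies in $\{2^k,\dots,1\} \cup C$ and so is a power of $2$, and get $D(x,1)$ from the fact that every positive integer is $\ge 1$.
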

\begin{proof}
The iterates of $2^k$ are $2^k, 2^{k-1}, \dots, 2, 1$, after which the
orbit follows $C$. Hence $R(x,1)$ holds. The set of values reachable
from $x$ is $\{2^k, \dots, 2, 1\} \cup C$. By Lemma~\ref{lem:cycle},
$C = \{1,2,\dots,2^{s-1}\}$, so every reachable value is a power of $2$.
The only odd power of $2$ is $1$, so every reachable $z \ne 1$ is even.
For $D(x,1)$: $R(x,1)$ holds, and every positive integer is $\ge 1$, so
the universal clause holds.
\end{proof}

\begin{lemma}[Non-powers of $2$]\label{lem:non_power_two_path}
Assume $q+d = 2^s$. Let $x \in \mathbb{Z}_{>0}$ not be a power of $2$,
and suppose $R(x,1)$ holds. Then there is an odd integer $m > 1$ with
$D(x,m)$.
\end{lemma}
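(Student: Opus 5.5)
The idea is to take $m$ to be the smallest value in the forward orbit of $x$ that is not a power of $2$. Since powers of $2$ only descend through powers of $2$ into $C$, they can never reach such an $m$. Any midpoint $z$ that does reach $m$ is therefore itself a non-power of $2$, and minimality then gives $z \ge m$.

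\textbf{Step 1 (choice of $m$).} Let $O = \{T_{q,d}^k(x) : k \ge 0\}$ be the set of values reachable from $x$. By Lemma~\ref{lem:single_visit}, $O$ consists of finitely many transient values together with $C$. Let $N = O \setminus P$ be the reachable values that are not powers of $2$. Since $x \in N$, the set $N$ is nonempty, so it has a least element $m$ by well-ordering. Clearly $R(x,m)$ holds.

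\textbf{Step 2 ($m$ is odd and $m>1$).} We have $m \ne 1$ because $1 \in P$. Suppose $m$ were even. Then $T_{q,d}(m) = m/2$ is reachable from $x$. Moreover $m/2$ is not a power of $2$, since otherwise $m = 2\cdot(m/2)$ would be one. So $m/2 \in N$ and $m/2 < m$, contradicting minimality. Hence $m$ is odd and $m>1$.

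\textbf{Step 3 (the floor condition $D(x,m)$).} Let $z$ satisfy $R(x,z)$ and $R(z,m)$; we must show $z \ge m$. There are two cases.
\begin{itemize}
\item If $z$ is a power of $2$, the argument of Lemma~\ref{lem:power_two_path} shows that every value reachable from $z$ is a power of $2$. Then $R(z,m)$ would force $m \in P$, contradicting $m \in N$. So this case cannot occur.
\item If $z$ is not a power of $2$, then $z \in O$ by $R(x,z)$, so $z \in N$. Minimality of $m$ gives $z \ge m$.
\end{itemize}
Combined with $R(x,m)$, this establishes $D(x,m)$.

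\textbf{Main obstacle.} The only delicate point is choosing $m$ correctly. A naive choice, such as the first or last odd value greater than $1$ in the transient, need not be below all of its predecessors, and the floor condition would then fail. Minimizing over all non-powers of $2$ in the orbit makes Step 3 immediate and yields oddness for free.

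Note that Lemma~\ref{lem:single_visit} is used only to see that $O$ has the described shape; well-ordering alone already supplies $m$. The hypothesis $R(x,1)$ is not essential to the argument beyond guaranteeing this structure.
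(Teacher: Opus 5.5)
Your proof is correct, but it takes a different route from the paper. The paper chooses $m$ to be the odd part of $x = m\cdot 2^{a}$. It notes $m\notin C$ by Lemma~\ref{lem:cycle}, then uses Lemma~\ref{lem:single_visit}, and hence the hypothesis $R(x,1)$, to conclude that $m$ occurs only at step $a$. From $T_{q,d}^{i+j}(x)=m$ it gets $i+j=a$, so every midpoint is an explicit halving $z=m\cdot 2^{a-i}\ge m$.

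You instead take $m=\min(O\setminus P)$. You then need only that $P$ is forward-closed under $T_{q,d}$, which is exactly where $q+d=2^s$ enters, through $T_{q,d}(1)=2^{s-1}$. A midpoint that reaches $m\notin P$ cannot lie in $P$, so minimality gives $z\ge m$, and minimality also gives oddness for free.

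Your version never uses $R(x,1)$ or Lemma~\ref{lem:single_visit}. It therefore proves the statement for every non-power of $2$, including orbits that diverge or enter a different cycle. The paper's version gives an explicit witness and an explicit list of all midpoints, at the cost of depending on the single-visit lemma.

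One correction to your ``main obstacle'' remark. The \emph{first} odd value greater than $1$ in the orbit is precisely the paper's $m$. Its only predecessors are the values $m\cdot 2^{j}\ge m$, and under $R(x,1)$ it cannot recur, so it does satisfy the floor condition. It can fail only if $R(x,1)$ is dropped and $m$ lies on some other cycle without being that cycle's minimum. Your warning is valid for the \emph{last} odd value: for $T_{3,1}$ and $x=3$, the orbit passes $3\to 5$, and $D(3,5)$ fails with midpoint $z=3$.
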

\begin{proof}
Since $x$ is not a power of $2$, write $x = m \cdot 2^a$ with $m$ odd,
$m > 1$, and $a \ge 0$. The first $a$ iterates apply the even branch:
\[
m\cdot 2^{a} \to m\cdot 2^{a-1} \to \dots \to m\cdot 2 \to m.
\]
So $R(x,m)$ holds, and $m$ occurs in the orbit at step $a$. By
Lemma~\ref{lem:cycle}, $C$ contains no odd value greater than $1$, so
$m \notin C$. By Lemma~\ref{lem:single_visit}, $m$ occurs exactly once
in the orbit, namely at step $a$.

Let $z$ satisfy $R(x,z) \land R(z,m)$. Then $z = T_{q,d}^i(x)$ and $m = T_{q,d}^j(z)$ for some $i, j \ge 0$. This implies $m = T_{q,d}^{i+j}(x)$. Since $m$ occurs exactly once in the orbit at step $a$, we must have $i+j = a$. Hence $i \le a$. The orbit values at steps $0,1,\dots,a$ are exactly the sequence of pure halvings, so $z = m \cdot 2^{a-i}$. Since $a-i \ge 0$, it follows that $z \ge m$. This holds for all such $z$, so $D(x,m)$ holds.
\end{proof}

\subsection{The obstruction the restriction avoids}

The proof extracts the powers of $2$ from $R$ using a reachable floor relation (Definition~\ref{def:reachable_floor}). Because $R$ is
unparameterized, $R(x,z) \land R(z,y)$ states that $z$ is some iterate
of $x$ and $y$ is some later iterate of $z$. It does not state that $z$
lies on a single descent from $x$ to $y$. Once an orbit enters a cycle,
every node of the cycle reaches every other node. If the cycle reachable
from $1$ contains an odd value $m > 1$, then $1$ reaches $m$, and the
reachable floor relation fails for $m$ even when $x$ is a power of $2$. The
extraction then no longer selects the powers of $2$.

A concrete failure occurs at $q = 5$, $d = 1$, where $q+d = 6$ is not a
power of $2$. The orbit of $3$ is
\[
3 \to 8 \to 4 \to 2 \to 1 \to 3 \to 8 \to \cdots,
\]
since $T_{5,1}(3) = 8$ and $T_{5,1}(1) = 3$. The reachable set is the
cycle $\{1,2,3,4,8\}$, which contains the odd value $3 > 1$. For every
$u \in \{2,3,4,8\}$ we have $R(3,1)$ and $R(1,u)$ with $1 < u$, so the
reachable floor relation fails for each such $u$ and holds only for $u =
1$. The extracted formula then accepts $3$, which is not a power of $2$.
This is why the unrestricted claim fails.

The standing assumption $q+d = 2^s$ removes this failure. By
Lemma~\ref{lem:cycle}, the cycle reachable from $1$ is $C =
\{1,2,\dots,2^{s-1}\}$, which contains no odd value greater than $1$.

\section{Logical Extraction}
\label{sec:extraction}

\begin{lemma}\label{lem:extraction}
Assume $R(x,z)$ is first-order definable in $\mathrm{BA}_q$. Then the
formula
\[
\psi(x) \;\equiv\; D(x,1) \land \forall u\,\bigl( (D(x,u) \land u \neq
1) \implies \mathrm{Even}(u) \bigr)
\]
is first-order definable in $\mathrm{BA}_q$.
\end{lemma}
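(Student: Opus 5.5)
The plan is a closure argument: first-order definable relations in $\mathrm{BA}_q$ are closed under the Boolean connectives, quantification over $\mathbb{N}$, and substitution of definable terms and constants. The proof of Lemma~\ref{lem:extraction} then amounts to rewriting $\psi$ as a formula in the signature $\langle +, V_q\rangle$ together with a formula $\rho(x,z)$ defining $R$, which exists by hypothesis. Throughout we relativize every quantifier and every free variable to positive integers with the guard $\mathrm{Pos}(v) \iff \exists y\,(v = y+1)$. This guard is needed because $R$, and hence $D$, is only meaningful on $\mathbb{Z}_{>0}$.

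First I treat the auxiliary notions.
\begin{itemize}
\item The constant $1$ is definable: $v = 1 \iff \mathrm{Pos}(v) \land \forall w\,(\mathrm{Pos}(w) \implies v \le w)$. Equivalently, $1$ is the unique positive $v$ with $V_q(v)=1$ and $v < q$; the minimality formulation is simpler.
\item The order is definable: $x \ge y \iff \exists w\,(x = y+w)$.
\item Parity is definable by Definition~\ref{def:parity}.
\end{itemize}

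Next, substituting $\rho$ for each occurrence of $R$ in Definition~\ref{def:reachable_floor} gives
\[
\delta(x,y) \equiv \rho(x,y) \land \forall z\,\bigl(\mathrm{Pos}(z) \land \rho(x,z) \land \rho(z,y) \implies \exists w\,(z = y+w)\bigr),
\]
which defines $D$. Finally I substitute $\delta$ and the definition of $1$ into $\psi$. For instance, $D(x,1)$ becomes $\exists o\,(o = 1 \land \delta(x,o))$, and $u \neq 1$ becomes $\neg(u = 1)$. The result is a single first-order formula whose only free variable is $x$.

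The only point needing care is fidelity to the intended semantics: the defined set must equal $\{x > 0 : \psi(x)\}$, with $R$ read on positive integers. The positivity guards inside the quantifier over $z$ in $\delta$, and over $u$ in $\psi$, ensure that $\rho$ is never evaluated at $0$. They also ensure that the quantifier ranges over exactly the domain Definition~\ref{def:reachable_floor} intends. I expect no real obstacle, since this is routine closure of definability under logical operations. The step I would check most carefully is that the order comparison $z \ge y$ in $D$ uses the definable $\exists w$ form rather than an undefined primitive. With that check, the set defined by $\psi$ is first-order definable in $\mathrm{BA}_q$.
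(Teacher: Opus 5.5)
Your proposal is correct and follows the same approach as the paper. The paper's proof also just notes that $D$ is a first-order combination of $R$ and the additively definable order, and that parity and the constant $1$ are definable, so $\psi$ is a $\mathrm{BA}_q$-formula; your explicit $\delta$ and positivity guards spell this out. Two small fixes: your aside that $1$ is the unique positive $v$ with $V_q(v)=1$ and $v<q$ is false, since every $v\in\{1,\dots,q-1\}$ qualifies. Also, defining $\mathrm{Pos}$ through $y+1$ while defining $1$ through $\mathrm{Pos}$ is circular unless $1$ is a primitive; take $\mathrm{Pos}(v) \iff \neg(v+v=v)$ instead.
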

\begin{proof}
$D$ is a first-order combination of $R$, which is definable by
assumption, and the order, which is definable through addition. Parity
is definable by Definition~\ref{def:parity}. The constant $1$ is
definable. Hence $\psi$ is a first-order formula in $\mathrm{BA}_q$.
\end{proof}

To illustrate the extraction, consider a trace in $T_{3,1}$. The formula $\psi$ accepts $8$ because the orbit is $8 \to 4 \to 2 \to 1 \to \cdots$, making the reachable values $\{1, 2, 4, 8\}$. For all $u \neq 1$, $u$ is even, so the universal clause holds. Conversely, $\psi$ rejects $6$. The orbit of $6$ is $6 \to 3 \to 10 \to 5 \to \cdots$. Here $R(6,3)$ holds, and since no intermediate step drops below $3$, $D(6,3)$ holds. However, $3$ is odd and $3 \neq 1$, which violates the universal clause.

\section{Main Theorem}
\label{sec:main}

\begin{theorem}\label{thm:main}
Let $q \ge 3$ and $d \ge 1$ be odd integers with $q+d$ a power of $2$.
Then the reachability relation $R(x,z)$ of the generalized Collatz map
$T_{q,d}$ is not first-order definable in $\mathrm{BA}_q$.
\end{theorem}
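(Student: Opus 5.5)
We argue by contradiction. Suppose the reachability relation $R$ of $T_{q,d}$ is first-order definable in $\mathrm{BA}_q$, viewed as a relation on $\mathbb{N}$ with $0$ excluded by the positivity guard of Section~\ref{sec:prelim}. By Lemma~\ref{lem:extraction}, the formula $\psi(x)$ is then first-order in $\mathrm{BA}_q$, and so is
\[
\psi^{+}(x) \;\equiv\; \exists y\,(x = y+1) \land \psi(x).
\]
The plan is to show that $\psi^{+}$ defines exactly $P = \{2^k \mid k \in \mathbb{N}\}$. This contradicts Lemma~\ref{lem:undefinable_subset}, which is where Cobham's theorem enters.

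\emph{Powers of $2$ satisfy $\psi^{+}$.} Let $x = 2^k$, so $x > 0$. By Lemma~\ref{lem:power_two_path}, $D(x,1)$ holds. For the universal clause, take any $u$ with $D(x,u)$ and $u \neq 1$. By Definition~\ref{def:reachable_floor}, $D(x,u)$ entails $R(x,u)$, so $u$ is reachable from $x$. Lemma~\ref{lem:power_two_path} then says every reachable value other than $1$ is even, so $u$ is even. Hence $\psi^{+}(x)$ holds.

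\emph{Only powers of $2$ satisfy $\psi^{+}$.} Let $x$ satisfy $\psi^{+}$, so $x > 0$. The conjunct $D(x,1)$ gives $R(x,1)$. Suppose $x$ is not a power of $2$. Lemma~\ref{lem:non_power_two_path} applies, because its hypothesis $R(x,1)$ holds, and yields an odd $m > 1$ with $D(x,m)$. This $m$ violates the universal clause of $\psi$, a contradiction. So $x \in P$.

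The step I expect to need the most care is this converse direction, in particular the treatment of $x$ whose orbit never reaches $1$, such as a divergent orbit or one entering some other cycle. The plan handles them without any appeal to the Collatz conjecture: such $x$ fail $D(x,1)$ outright, since $D(x,1)$ requires $R(x,1)$, and so they are rejected. Every remaining $x$ satisfies $R(x,1)$, which is exactly what Lemma~\ref{lem:single_visit} and Lemma~\ref{lem:non_power_two_path} need. The hypothesis $q+d = 2^s$ is used only through Lemma~\ref{lem:cycle}, inside those lemmas. It guarantees that the terminal cycle $C$ contains no odd value other than $1$, so the witness $m$ is visited only once and is a genuine floor.

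Combining the two directions, $\psi^{+}$ defines $P$ in $\mathrm{BA}_q$, contradicting Lemma~\ref{lem:undefinable_subset}. Therefore $R$ is not first-order definable in $\mathrm{BA}_q$.
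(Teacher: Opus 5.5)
Your proposal is correct and follows the paper's proof. You assume $R$ is definable, obtain $\psi$ from Lemma~\ref{lem:extraction}, show with Lemmas~\ref{lem:power_two_path} and~\ref{lem:non_power_two_path} that $\psi$ defines exactly the powers of $2$, and contradict Lemma~\ref{lem:undefinable_subset}. The explicit positivity conjunct in $\psi^{+}$ only formalizes the paper's restriction $S \subseteq \mathbb{Z}_{>0}$, and it is redundant because $D(0,1)$ already fails.
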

\begin{proof}
Assume for contradiction that $R$ is first-order definable in
$\mathrm{BA}_q$. By Lemma~\ref{lem:extraction}, the set
\[
S = \{x \in \mathbb{Z}_{>0} \mid \psi(x)\}
\]
is first-order definable in $\mathrm{BA}_q$. We show $S$ is exactly the
set of powers of $2$.

\medskip
\noindent\textbf{Claim (a): $\{2^k \mid k \in \mathbb{N}\} \subseteq S$.}
Let $x = 2^k$. By Lemma~\ref{lem:power_two_path}, $D(x,1)$ holds, and
every reachable value $z \ne 1$ is even. Any $u$ with $D(x,u)$ satisfies
$R(x,u)$, so $u$ is reachable from $x$; if $u \ne 1$ then $u$ is even.
The universal clause of $\psi(x)$ therefore holds, so $2^k \in S$.

\medskip
\noindent\textbf{Claim (b): $S \subseteq \{2^k \mid k \in \mathbb{N}\}$.}
Let $x \in S$. Since $\psi(x)$ holds, $D(x,1)$ holds, so $R(x,1)$ holds.
Suppose for contradiction that $x$ is not a power of $2$. By
Lemma~\ref{lem:non_power_two_path}, there is an odd integer $m > 1$ with
$D(x,m)$. Since $m \ne 1$, the universal clause of $\psi(x)$ forces
$\mathrm{Even}(m)$, contradicting that $m$ is odd. Hence $x$ is a power
of $2$.

\medskip
\noindent\textbf{Contradiction.}
By Claims (a) and (b), $S = \{2^k \mid k \in \mathbb{N}\}$. Definability
of $R$ makes $S$ first-order definable in $\mathrm{BA}_q$, hence
$q$-recognizable. By Lemma~\ref{lem:undefinable_subset}, the set of
powers of $2$ is not $q$-recognizable. This is a contradiction.
Therefore $R$ is not first-order definable in $\mathrm{BA}_q$.
\end{proof}

\begin{remark}[Conjecture-independence]
The proof does not depend on the Collatz conjecture. Claim (a) uses only
that each power of $2$ halves down to $1$, which is immediate. Claim (b)
conditions on $R(x,1)$; for an $x$ that does not reach $1$, the relation
$D(x,1)$ fails and $x \notin S$, which is consistent with $x$ not being
a power of $2$. The set $S$ is defined as the elements satisfying
$\psi$, so its membership question never requires knowing which inputs
reach $1$.
\end{remark}

\begin{remark}[Decidability placement of $R$]
The relation $R$ is computably enumerable in general: to confirm
$(x,z) \in R$, iterate $T_{q,d}$ from $x$ and halt on reaching $z$.
Restricted to pairs whose orbits are eventually periodic, for example
inputs known to reach the cycle $C$, reachability between two such points
is decidable, since the orbit is then a finite transient followed by a
known cycle. Theorem~\ref{thm:main} is a statement about automaton
recognizability of $R$, which is a separate axis from this
decidability.
\end{remark}

\section{Discussion}
\label{sec:discussion}

The relation $R$ is not definable in $\mathrm{BA}_q$. The map $T_{q,d}$ combines division by $2$ with multiplication
by $q$, and $2$ and $q$ are multiplicatively independent. From $R$ we
extract a definable copy of the powers of $2$, and Cobham's theorem then
forbids it. The same obstruction appears in any base $b$ that is
multiplicatively independent of $2$. Base $2$ would remove it but would
make multiplication by $q$ unrecognizable. The obstruction is in the
map, not in the choice of base.

\subsection{Limitations}

The restriction $q+d = 2^s$ is forced by the extraction method, not by
the conclusion. The extraction reads the set of reachable values rather
than the order of visits, because $R$ is unparameterized. This is enough
to separate the powers of $2$ only when the cycle reachable from $1$
contains no odd value greater than $1$. By Lemma~\ref{lem:cycle}, $q+d =
2^s$ is exactly the condition that makes that cycle the pure halving
cycle $\{1,2,\dots,2^{s-1}\}$. When $q+d$ is not a power of $2$, the
cycle picks up an odd value greater than $1$, the reachable floor relation
collapses, and the extracted set is wrong. The $q=5,d=1$ case in
Section~\ref{sec:scope} shows this directly. 

\subsection{Significance}
Despite the restriction, the result remains substantial for three reasons. First, the family is
infinite: for every $s \ge 2$ it contains every pair of odd integers
summing to $2^s$, and it contains the classical Collatz map $T_{3,1}$.
Second, the proof is self-contained, uses no universal computation, and
does not rely on the Collatz conjecture or on any undecidability result.
Third, it isolates the actual mechanism, the multiplicative
independence of $2$ and $q$, and exhibits it through a single explicit
formula rather than an encoding argument.

We do not claim non-definability of $R$ for pairs $(q,d)$ outside the
family. That question may have the same answer, but it requires an
invariant that survives re-entry into a cycle containing odd values, and
the reachable floor extraction does not provide one. We leave it open.

\begin{credits}
\subsubsection{\ackname}
The authors acknowledge the use of large language models for assistance
with \LaTeX{} typesetting and grammar editing. The authors take full
responsibility for the scientific content and originality of the proofs.
\subsubsection{\discintname}
The authors have no competing interests.
\end{credits}


\end{document}